\newcommand{\ROM}[1]{\mathrm{\uppercase\expandafter{\romannumeral#1}}}
\theoremstyle{definition}
\newtheorem{thm}{Theorem}[section]
\newtheorem{lem}{Lemma}[section]
\newtheorem{rem}{Remark}[section]
\newtheorem{ack}{Acknowledgements}   
\title[New examples of Willmore submanifolds in the unit sphere, II]{\textbf{New examples of Willmore submanifolds in the unit sphere via isoparametric functions, II}}
\author[C. Qian]{Chao Qian}\address{School of Mathematical Sciences, Laboratory of Mathematics and Complex Systems, Beijing Normal
University, Beijing 100875, China}\email{qianchao\_1986@163.com}
\author[Z.Z.Tang]{Zizhou Tang}\address{School of Mathematical Sciences, Laboratory of Mathematics and Complex Systems, Beijing Normal
University, Beijing 100875, China}\email{zztang@bnu.edu.cn}
\thanks {$^{\dag}$ the corresponding author}
\thanks {The project is partially supported by the NSFC ( No.11071018 ) and the Program for Changjiang Scholars and Innovative
Research Team in University.}
\author[W. J. Yan]{Wenjiao Yan$^{\dag}$}
\address{School of Mathematical Sciences, Laboratory of Mathematics and Complex Systems, Beijing Normal
University, Beijing 100875, China} \email{wjyan@mail.bnu.edu.cn}
 \subjclass[2000]{ 53A30, 53C42.}
\date{}
\keywords{Willmore submanifold, isoparametric functions, focal submanifolds.}
\begin{document}

\maketitle
\begin{center}
Dedicated to Professor Banghe Li on his 70-th birthday.
\end{center}

\begin{abstract}
This paper is a continuation and wide extension of \cite{TY}. In the
first part of the present paper, we give a unified geometric proof
that both focal submanifolds of every isoparametric hypersurface in
spheres with four distinct principal curvatures are Willmore. In the
second part, we completely determine which focal submanifolds are
Einstein except one case.
\end{abstract}

\section{Introduction}

Let $x:M^n \rightarrow S^{n+p}$ be an immersion from an
$n$-dimensional compact manifold to an $(n + p)$-dimensional unit
sphere $S^{n+p}$. Denote by $h$ the second fundamental form of $x$,
$S$ the norm square of $h$, and $H$ the norm of the mean curvature
vector, respectively. Then $M^n$ is called a \emph{Willmore
submanifold} in $S^{n+p}$ if it is an extremal submanifold of the
Willmore functional, which is a conformal invariant (\emph{cf.}
\cite{Wan1}):
$$W(x)=\int_{M^n} (S-nH^2)^{\frac{n}{2}}dv.$$

An equivalent condition for $M^n$ to be Willmore was given in \cite{GLW}, \cite{PW}.
In particular, when $M^n$ is a minimal submanifold in $S^{n+p}$ with constant $S$,
under a field of local orthonormal basis
$\{e_A\}$ $(1 \leq A \leq n + p)$ for $TS^{n+p}$, in which $\{e_i\}\in TM$ $(1\leq i\leq n)$
and $\{e_{\alpha}\}\in T^{\perp}M$ $(n+1\leq \alpha \leq n+p)$, their criterion
for Willmore reduces to be:
\begin{equation}\label{reduced Willmore}
for ~any  ~\alpha, ~~\sum_{i,j=1}^{n} R_{ij}h^{\alpha}_{ij}=0,
\end{equation}
where $R_{ij}$ is the Ricci tensor of $M^n$, $h^{\alpha}_{ij}$ is the component of $h$
with respect to $e_i, e_j$ and $e_{\alpha}$.

Based on the reduced criterion (\ref{reduced Willmore}), in
conjunction with the fact that the focal submanifolds of an
isoparametric hypersurface in $S^{n+1}$ are minimal submanifolds of
$S^{n+1}$ with constant $S$, we establish one of our main results as
follows
\begin{thm}\label{thm1}
{\itshape Both focal submanifolds of every isoparametric
hypersurface in unit spheres with four distinct principal curvatures
are Willmore.}
\end{thm}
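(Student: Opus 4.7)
The plan is to translate the reduced Willmore criterion~\eqref{reduced Willmore} into a purely algebraic identity on the family of shape operators of the focal submanifold, and then exploit the rigid eigenvalue structure that $g=4$ isoparametric theory provides. Let $M$ denote one of the two focal submanifolds, set $d=\dim M$, and fix a local orthonormal frame $\{e_\alpha\}$ of the normal bundle of $M$ in $S^{n+1}$; write $A_\alpha$ for the shape operator in direction $e_\alpha$. Since $M$ is minimal in $S^{n+1}$, the Gauss equation yields
$$R_{ij}=(d-1)\delta_{ij}-\sum_\alpha (A_\alpha^2)_{ij},$$
and substitution into~\eqref{reduced Willmore}, using minimality once more to annihilate the $(d-1)\delta_{ij}$ piece, reduces the Willmore condition on $M$ to the single algebraic requirement
$$\sum_\alpha \Tr{A_\alpha^2\,A_\beta}=0\qquad\text{for every index }\beta.$$

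The geometric input is the classical structural result, going back to M\"unzner, that for a focal submanifold of a $g=4$ isoparametric hypersurface in $S^{n+1}$ with multiplicities $(m_1,m_2)$, the shape operator $A_\xi$ in \emph{every} unit normal direction $\xi$ has exactly the three eigenvalues $\{1,0,-1\}$, with multiplicity pattern $(m_2,m_1,m_2)$ on one focal submanifold and $(m_1,m_2,m_1)$ on the other, and, crucially, independent of the chosen direction $\xi$. In particular $\Tr{A_\xi^3}=0$ for every unit normal vector $\xi$.

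Since $A_\xi=\sum_\alpha \xi^\alpha A_\alpha$ depends linearly on $\xi$, the function $\xi\mapsto \Tr{A_\xi^3}$ is a homogeneous cubic polynomial on the normal space, so its vanishing on the unit sphere forces it to vanish identically. The trilinear form $(\alpha,\beta,\gamma)\mapsto \Tr{A_\alpha A_\beta A_\gamma}$ is completely symmetric in its indices—cyclicity of the trace together with self-adjointness of each $A_\alpha$ together force equality of all six orderings—so polarization of $\Tr{A_\xi^3}\equiv 0$ yields $\Tr{A_\alpha A_\beta A_\gamma}=0$ for all $\alpha,\beta,\gamma$; taking $\gamma=\alpha$ and summing over $\alpha$ is precisely the identity displayed above, completing the verification. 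I expect the main obstacle to lie not in the algebra but in a clean justification of the uniform eigenvalue structure of $A_\xi$ across the entire unit normal sphere of $M$, which I plan to read off M\"unzner's parallel-hypersurface picture: the principal curvatures of the parallel family of $M$ at signed distance $t$ are $\cot(t+(k-1)\pi/4)$ with multiplicities fixed by the $g=4$ pattern, and at a focal parameter the three remaining finite values collapse to exactly $\{1,0,-1\}$ with the asserted multiplicities.
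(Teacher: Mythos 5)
Your proposal is correct, and it closes the argument by a genuinely different mechanism than the paper. Both proofs start from the same reduction of the criterion (\ref{reduced Willmore}) for a minimal submanifold: the Gauss equation turns it into the algebraic condition $\sum_{\gamma}\Tr{A_\gamma^2A_\beta}=0$ for each normal index $\beta$, which in the paper's eigenbasis of $S_{n_0}$ is exactly the condition $\sum_\alpha Ric(e_\alpha)=\sum_\mu Ric(e_\mu)$ and ultimately the identity $\sum_{a,p,\alpha}(F_{pa}^{\alpha})^2=\sum_{a,p,\mu}(F_{pa}^{\mu})^2$, i.e.\ $\sum_a\|B_a\|^2=\sum_a\|C_a\|^2$ in the notation of (\ref{ABC}). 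The paper establishes this last identity by invoking Lemma 49 of \cite{CCJ} (for each $a$ there are orthonormal bases of $V_+$ and $V_-$ making $B_a=C_a$), a nontrivial structural fact about $g=4$ shape operators. You instead observe that the M\"unzner eigenvalue rigidity --- every unit normal shape operator has spectrum $\{1,0,-1\}$ with multiplicities $(m_2,m_1,m_2)$, resp.\ $(m_1,m_2,m_1)$ --- forces $\Tr{A_\xi^3}=0$ on the whole unit normal sphere, and then polarize the resulting identically vanishing cubic; since $\Tr{A_\alpha A_\beta A_\gamma}$ is fully symmetric (cyclicity plus self-adjointness), you get $\Tr{A_\alpha A_\beta A_\gamma}=0$ for \emph{all} triples, which is strictly stronger than what is needed. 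In particular, taking $\gamma=n_a$, $\gamma=n_a$, $\beta=n_0$ in block form recovers $\|B_a\|^2=\|C_a\|^2$ for each $a$ separately, so your polarization argument actually reproves the trace consequence of Lemma 49 of \cite{CCJ} from scratch. What your route buys is self-containedness and economy: it needs only the direction-independence of the spectrum (standard M\"unzner theory, stated in Section 2.1 of the paper) rather than the finer frame computations of \cite{CCJ}; what the paper's route buys is that it sits inside the explicit moving-frame apparatus that is reused later for the Einstein analysis.
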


\begin{rem}
This theorem extends widely the main result in \cite{TY}, where one
focal submanifold of every isoparametric hypersurface of FKM-type
was shown to be Willmore.
\end{rem}

We need some preliminaries on isoparametric hypersurfaces. It is
well known that an isoparametric hypersurface in a complete
Riemannian manifold $N$ always comes as a family of parallel
hypersurfaces, which are level hypersurfaces of an \emph{
isoparametric function} $f$, that is, a non-constant smooth function
on $N$ satisfying
\begin{equation}\label{ab}
\left\{ \begin{array}{ll}
|\nabla f|^2= b(f),\\
~~~~\Delta f~~=a(f),
\end{array}\right.
\end{equation}
where $\nabla f$ and $\Delta f$ are the gradient and Laplacian of
$f$ on $N$, respectively, $b$ and $a$ are smooth and continuous functions
on $\mathbb{R}$, respectively. The preimage of the global
maximum (\emph{resp}. minimum) of an isoparametric function $f$ is
called the \emph{focal variety} of $f$, denoted by $M_{+}$
(\emph{resp}. $M_{-}$), if nonempty. A fundamental structural result
claimed by \cite{Wa87}, proved in details by \cite{GT}, asserts that
each focal variety of an isoparametric function is a minimal
submanifold of $N$.

The equations in (\ref{ab}) mean that the parallel level
hypersurfaces have constant mean curvatures. As is well known, an
isoparametric hypersurface $M^n$ in the unit sphere $S^{n+1}$
actually has constant principal curvatures. Let $g$ be the number of
distinct principal curvatures, which are denoted by $k_i$ ($k_1 >$
... $>k_g$) with multiplicity $m_i$ ($i=1,...,g$). A remarkable
result proved by M{\"u}nzner \cite{Mun} states that $m_i=m_{i+2}$
(subscripts mod $g$) and the isoparametric function $f$ must be the
restriction to $S^{n+1}$ of a homogeneous polynomial $F:
\mathbb{R}^{n+2} \rightarrow \mathbb{R}$ of degree $g$ satisfying
the \emph{Cartan-M{\"u}nzner equations}:
\begin{equation}\label{C-M}
\left\{ \begin{array}{ll}
|\nabla F|^2= g^2|x|^{2g-2}, \\
~~~~\Delta F~~=\frac{m_2-m_1}{2}g^2|x|^{g-2},
\end{array}\right.
\end{equation}
where $\nabla F$ and $\Delta F$ are the gradient and Laplacian of
$F$ on $\mathbb{R}^{n+2}$. The polynomial $F$ is called the
\emph{Cartan-M{\"u}nzner polinomial}, its restriction
$f=F|_{S^{n+1}}$ takes values in $[-1, 1]$ on $S^{n+1}$. The focal
submanifolds $M_+:=f^{-1}(1)$ and $M_-:=f^{-1}(-1)$ are in fact
minimal submanifolds of $S^{n+1}$ with respective codimensions
$m_1+1$ and $m_2+1$, whose second fundamental forms are both of
constant length (\emph{cf.} \cite{CR}).

As a corollary of the criterion (\ref{reduced Willmore}), every
n-dimensional Einstein manifold minimally immersed in the unit
sphere $S^{n+p}$ is a Willmore submanifold. Then a natural problem
arises: is every focal submanifold Einstein? According to the
concluding remark of \cite{TY}, there are only few focal
submanifolds $M_+$ of FKM-type being possibly Einstein. As another
main result of this paper, we give a complete resolution of this
problem for the focal submanifolds of every isoparametric
hypersurface in $S^{n+1}$ with four distinct principal curvatures,
except the open case $(m_1, m_2)=(7,8)$.

To state Theorem \ref{thm2} clearly, we recall the construction of
isoparametric functions of FKM-type. For a symmetric Clifford system
$\{P_0,\cdots,P_m\}$ on $\mathbb{R}^{2l}$, \emph{i.e.} $P_i$'s are
symmetric matrices satisfying $P_iP_j+P_jP_i=2\delta_{ij}I_{2l}$,
Ferus, Karcher and M\"{u}nzner (\cite{FKM}) constructed a polynomial
$F$ (called \emph{FKM-type isoparametric polynomial}) of degree $4$
on $\mathbb{R}^{2l}$:
\begin{eqnarray}\label{FKM isop. poly.}
&&\qquad F:\quad \mathbb{R}^{2l}\rightarrow \mathbb{R}\nonumber\\
&&F(x) = |x|^4 - 2\displaystyle\sum_{i = 0}^{m}{\langle
P_ix,x\rangle^2},
\end{eqnarray} which satisfies the Cartan-M{\"u}nzner equations. Moreover, it is easy to verify that $f=F|_{S^{2l-1}}$ satisfies (\emph{cf.} \cite{GTY}):
\begin{equation}\label{ab in Sn}
\left\{ \begin{array}{ll}
|\nabla f|^2= 16 (1-f^2),\\
~~~\Delta f~~=8(m_2-m_1)-4(2l+2)f,
\end{array}\right.
\end{equation}
where $m_1=m$, $m_2=l-m-1$. Thus by definition, $f$ is an
\emph{isoparametric function} on $S^{2l-1}$.

According to \cite{CCJ}, \cite{Imm} and \cite{CHI}, all
isoparametric hypersurfaces in spheres with four distinct principal
curvatures are of FKM-type, except for the cases $(m_1, m_2)=(2,2)$
and $(4,5)$, and except possibly for cases with multiplicities
$(7,8)$, which has not been classified yet. As for the isoparametric
hypersurfaces with $(m_1, m_2)=(2,2)$ or $(4,5)$, they must be
homogeneous and thus unique (\emph{cf.} \cite{OT}, \cite{CHI}). We
are now ready to state the following

\begin{thm}\label{thm2}
{\itshape For the focal submanifolds of an isoparametric
hypersurface in $S^{n+1}$ with four distinct principal curvatures,
we have
\begin{itemize}
\item[(i)] All the $M_-$ of FKM-type are not Einstein; the $M_+$ of
           FKM-type is Einstein if and only if it is diffeomorphic to $Sp(2)$ in the homogeneous case with
           $(m_1,m_2)=(4,3)$.
\item[(ii)] In the case $(m_1,m_2)=(2,2)$, the focal submanifold
           diffeomorphic to $\widetilde{G}_2(\mathbb{R}^5)$ is Einstein, while the other one
           diffeomorphic to $\mathbb{C}P^3$ is not.
\item[(iii)]In the case $(m_1,m_2)=(4,5)$, both focal submanifolds are not Einstein.
\end{itemize}
}
\end{thm}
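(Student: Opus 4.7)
Since each focal submanifold $M_{\pm}$ is a minimal submanifold of $S^{n+1}$ with constant $S$, the Gauss equation gives
\[
R_{ij} = (\dim M_{\pm} - 1)\delta_{ij} - \sum_{\alpha}(A_{\alpha}^2)_{ij},
\]
where $A_{\alpha}$ denotes the shape operator along the normal direction $e_{\alpha}$. Hence $M_{\pm}$ is Einstein if and only if $\sum_{\alpha}A_{\alpha}^2$ is a scalar multiple of the identity at every point, necessarily $(S/\dim M_{\pm})\Id$. The whole theorem thus reduces to checking this pointwise algebraic condition on each focal submanifold.

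For the FKM-type part of (i), the plan is to exploit the explicit Clifford description. At a point $x\in M_+$, the vectors $\{P_0x,\ldots,P_mx\}$ are an orthonormal normal frame in $S^{2l-1}$, $T_xM_+$ is the orthogonal complement of $\mathrm{span}(x,P_0x,\ldots,P_mx)$ in $\R^{2l}$, and $A_{P_ix}=-\pi_{T_xM_+}\circ P_i$. A direct computation using $P_iP_j+P_jP_i=2\delta_{ij}I$ yields, for $u,v\in T_xM_+$,
\[
\Bigl\langle\sum_{i=0}^{m}A_{P_ix}^2\,v,\,u\Bigr\rangle = (m+1)\langle v,u\rangle - \sum_{i\neq j}\langle v,P_iP_jx\rangle\langle u,P_iP_jx\rangle,
\]
so $\sum_iA_{P_ix}^2=(m+1)\Id - Q$, where $Q$ is the positive semidefinite endomorphism of $T_xM_+$ whose image lies in $\mathrm{span}\{\pi_{T_xM_+}(P_iP_jx):i<j\}$ and therefore has rank at most $\binom{m+1}{2}$. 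For $M_+$ to be Einstein, $Q$ must be a scalar operator, forcing $\binom{m+1}{2}\ge\dim T_xM_+=2l-m-2$. This rank bound already eliminates most FKM multiplicity pairs $(m_1,m_2)=(m,l-m-1)$; among the survivors I would analyze $Q$ via the Clifford module structure of $T_xM_+$ and show that $Q$ is scalar only in the case $(m_1,m_2)=(4,3)$, which is then confirmed directly on the homogeneous example $M_+$ diffeomorphic to $Sp(2)$ with its bi-invariant metric. For $M_-$, a parallel computation at a point $x$ with $(\langle P_0x,x\rangle,\ldots,\langle P_mx,x\rangle)=(1,0,\ldots,0)$, together with the sphere-bundle structure of $M_-$, produces a vertical/horizontal splitting of $T_xM_-$; a direct calculation will show that the Ricci operator has different eigenvalues on the two pieces for every $(m_1,m_2)$.

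For (ii) and (iii), the isoparametric hypersurfaces are homogeneous (and non-FKM), so I would work with the explicit homogeneous structures. In the $(2,2)$ case, $\widetilde{G}_2(\R^5)=SO(5)/(SO(3)\times SO(2))$ is an irreducible symmetric space of rank $2$, hence every $SO(5)$-invariant metric on it (in particular the one induced from $S^9$) is a multiple of the canonical metric and Einstein. The other focal submanifold, diffeomorphic to $\mathbb{C}P^3$, is also $SO(5)$-homogeneous but its isotropy representation splits into two inequivalent irreducible summands (the twistor fibration over $S^4$); the induced metric is then parameterized by the two resulting scales, and one computes that the scale ratio does not meet the Einstein condition. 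The case $(4,5)$ is handled by the same template: identify both focal submanifolds as known homogeneous spaces with non-irreducible isotropy, decompose the induced invariant metric, and verify that it is not at the Einstein ratio.

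The main obstacle is the uniform FKM analysis. The rank bound $\binom{m+1}{2}\ge 2l-m-2$ narrows the candidates, but isolating $(m_1,m_2)=(4,3)$ as the unique Einstein case requires combining the Clifford representation theory of the symmetric system (in particular the multiplicity $\nu$ of the irreducible $Cl(m)$-module in $\R^{2l}$) with a careful bookkeeping of the projected frame $\{\pi_{T_xM_+}(P_iP_jx)\}_{i<j}$; one must show that in every other surviving case these vectors fail to form the tight frame of unit vectors that $Q=c\Id$ would require. Handling the $\mathbb{C}P^3$ sub-case of (ii) is also delicate, as it needs a precise identification of the induced metric within the two-parameter family of $SO(5)$-invariant metrics.
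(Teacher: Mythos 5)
Your reduction to the condition that $\sum_{\alpha}A_{\alpha}^2$ be scalar is exactly the paper's Lemma 3.1, and your identity $\sum_i A_{P_ix}^2=(m+1)\Id-Q$ reproduces the paper's Ricci formula $\mathrm{Ric}(X)=2(l-m-2)+2\sum_{i<j}\langle X,P_iP_jx\rangle^2$; the rank bound $\binom{m+1}{2}\ge 2l-m-2$ is likewise the paper's first filter, and your kernel-versus-nonkernel argument for $M_-$ matches the paper's. However, there is a genuine gap at the decisive step. Your plan is to show that ``$Q$ is scalar only in the case $(m_1,m_2)=(4,3)$,'' but Einstein-ness is \emph{not} determined by the multiplicity pair: there are two incongruent FKM families with $(m_1,m_2)=(4,3)$, distinguished by the invariant $\mathrm{Trace}(P_0P_1P_2P_3P_4)\in\{0,\pm 16\}$, and only the homogeneous one (trace $\pm 16$, where $P_0P_1P_2P_3P_4=\pm I$ forces the ten vectors $P_iP_jx$ to be an orthonormal basis of $T_xM_+$) has Einstein $M_+$; the inhomogeneous one does not. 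Any bookkeeping that tracks only $(m,l)$ cannot separate these two, so the ``only if'' direction cannot close as you have set it up. A second concrete gap: the cases $(7,8),(8,7),(9,6),(10,21)$ all \emph{pass} your rank bound, so ruling them out needs an actual mechanism. The paper does this by choosing $x$ to be a common eigenvector of the commuting operators $P_{2i}P_{2i+1}P_{2j}P_{2j+1}$, which creates enough linear relations among the $P_iP_jx$ (e.g. $P_0P_1x=\pm P_2P_3x=\cdots$) to drop the span below $\dim T_xM_+$ at that point, and for $(7,8)$ it invokes condition (A) of Ozeki--Takeuchi. Your proposal names this as ``the main obstacle'' but supplies no substitute.

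For part (ii) your representation-theoretic route for $\widetilde{G}_2(\mathbb{R}^5)$ is a genuinely cleaner alternative to the paper's explicit second-fundamental-form computation: the induced metric is $SO(5)$-invariant and the isotropy representation of the irreducible symmetric space $SO(5)/(SO(2)\times SO(3))$ is irreducible, so the metric is automatically Einstein. But the $\mathbb{C}P^3$ half is not a mere verification: as the twistor space of $S^4$, $\mathbb{C}P^3=Sp(2)/(Sp(1)\times U(1))$ carries \emph{two} invariant Einstein metrics within the two-parameter family (Fubini--Study and the squashed one), so ``the scale ratio does not meet the Einstein condition'' is precisely the computation you must perform, and identifying the induced metric inside that family is at least as much work as the paper's direct calculation of $\sum_i|S_iX|^2=3(x_1^2+x_2^2+y_1^2+y_2^2)$ from the Ozeki--Takeuchi expansion of the Cartan--M\"unzner polynomial. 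The same remark applies to the $(4,5)$ case, where $M_+\cong U(5)/(Sp(2)\times U(1))$ and $M_-\cong U(5)/(SU(2)\times U(3))$ are only identified, not analyzed, in your plan. In short: the skeleton is right and partly coincides with the paper, but the statements that actually prove the theorem — the trace-invariant dichotomy in the $(4,3)$ family, the eigenvector trick for the surviving multiplicities, and the explicit metric computations in (ii) and (iii) — are missing.
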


\begin{rem}
$(1)$. In the FKM-family, there are two incongruent examples corresponding to $(m_1,m_2)=(4,3)$, one is homogeneous
while the other is not. It is surprising that only the focal submanifold $M_+$ of the homogenous case is Einstein.

$(2)$. For g=4, we provide a complete determination for which focal
submanifolds are Einstein except the case $(m_1, m_2)=(7,8)$, which
has not been classified yet.
Fortunately, we are able to show that
for all the known examples with $(m_1, m_2)=(7,8)$ (in fact, three examples of FKM-type), the focal
submanifolds are not Einstein.
\end{rem}

\section{Isoparametric foliations}
\subsection{Preliminaries}
Let $M^n$ be an isoparametric hypersurface with four distinct
principal curvatures in the unit sphere $S^{n+1}$, and $F$ be the
corresponding Cartan-M{\"u}nzner polynomial. In the current
discussion, we focus only on the focal submanifold
$M_+=F^{-1}(1)\cap S^{n+1}$, since we can change $F$ to $-F$ so if
necessary.

In virtue of M\"{u}nzner, $M^n$ can be regarded as a unit normal
sphere bundle $UN_+$ over $M_+$. In addition, at any point $x\in
M_+$, the principal curvatures of the shape operator with respect to
any unit normal vector are $0,\ 1,\ -1$, with the corresponding
multiplicities $m_1,\ m_2$ and $m_2$.

Following \cite{CCJ}, let $(x, n_{0})\in UN_+$ be points in a small
open set, where $x\in M_+$ and $n_0$ is a unit normal vector of
$M_+$ at $x$. Adopting the following index ranges
\begin{eqnarray*}
&& i, j, k \in \{1,\cdots , n=2(m_1+m_2)\},\,\,\,\,\, a, b, c \in \{1,\cdots, m_1\},\\
&& p, q, r \in \{m_1+1,\cdots, 2m_1\},\,\,\,\,\, \alpha, \beta, \gamma \in \{2m_1+1,\cdots, 2m_1+m_2\},\\
&& \mu, \nu, \sigma \in \{2m_1+m_2+1,\cdots, 2m_1+2m_2\},
\end{eqnarray*}
we choose a smooth orthonormal frame $\{n_a\},\ \{e_p\},\
\{e_{\alpha}\},\ \{e_{\mu}\}$ of $UN_+$ in such a way that $\{n_a\}$
are tangent to the unit normal sphere at $n_0$, and $\{e_p\},\
\{e_{\alpha}\},\ \{e_{\mu}\}$ are respectively the basis vectors of
the eigenspaces $V_0,\ V_+,\ V_{-}$ of the shape operator $S_{n_0}$.

Since each frame vector can be regarded as a smooth map from $UN_+$
to $\mathbb{R}^{n+2}$, using the Einstein summation convention, we
have
\begin{eqnarray}\label{moving equation}
&&dx=\omega ^pe_p + \omega ^{\alpha }e_{\alpha }+ \omega ^{\mu }e_{\mu },\nonumber\\
&&d n_0=\omega ^bn_b - \omega ^{\alpha }e_{\alpha }+ \omega ^{\mu }e_{\mu }, \nonumber\\
&&d n_a=-\omega ^an_0+ \theta _{a}^{b}n_b + \theta _{a}^{q}e_q+  \theta _{a}^{\alpha }e_{\alpha }+ \theta _{a}^{\mu  }e_{\mu  },\\
&&de_p=-\omega ^px+ \theta _{p}^{b}n_b + \theta _{p}^{q}e_q+  \theta _{p}^{\alpha }e_{\alpha }+ \theta _{p}^{\mu  }e_{\mu  }, \nonumber\\
&&de_{\alpha }=-\omega ^{\alpha }x+ \omega ^{\alpha }n_0+\theta _{\alpha }^{b}n_b + \theta _{\alpha}^{q}e_q+  \theta _{\alpha}^{\beta }e_{\beta }+ \theta _{\alpha}^{\mu  }e_{\mu }, \nonumber\\
&&de_{\mu  }=-\omega ^{\mu  }x- \omega ^{\mu }n_0+\theta _{\mu  }^{b}n_b + \theta _{\mu}^{q}e_q+  \theta _{\mu}^{\alpha }e_{\alpha }+ \theta _{\mu}^{\nu   }e_{\nu },\nonumber
\end{eqnarray}
where
\begin{eqnarray}\label{connection}
&&\theta _{a}^{p}=\sum_{\alpha }F_{pa}^{\alpha}\omega ^{\alpha }- \sum_{\mu }F_{pa}^{\mu}\omega ^{\mu },  \ \theta _{a}^{\alpha }=\sum_{p }F_{pa}^{\alpha}\omega ^{p}- 2\sum_{\mu }F_{\alpha a}^{\mu}\omega ^{\mu }, \nonumber\\
&&\theta _{p}^{\alpha }=\sum_{a }F_{pa}^{\alpha}\omega ^{a }- 2\sum_{\mu }F_{\alpha p}^{\mu}\omega ^{\mu }, \ \theta _{a}^{\mu}=-\sum_{p }F_{pa}^{\mu}\omega ^{p }- 2\sum_{\alpha }F_{\alpha a}^{\mu}\omega ^{\alpha  }, \\
&&\theta _{p}^{\mu }=\sum_{a }F_{pa}^{\mu}\omega ^{a }+ 2\sum_{\alpha  }F_{\alpha p}^{\mu}\omega ^{\alpha },\ \theta _{\alpha }^{\mu }=\sum_{a }F_{\alpha a}^{\mu}\omega ^{a }+ \sum_{p}F_{\alpha p}^{\mu}\omega ^{p}.\nonumber
\end{eqnarray}

Combining the third equation in (\ref{moving equation}) with
(\ref{connection}), we obtain an explicit expression of
$S_a:=S_{n_a}$ in terms of $F_{\alpha a}^{\mu},\ F_{\alpha p}^{\mu},
\ F_{pa}^{\mu},\ F_{pa}^{\alpha}$ defined above and the orthonormal
coframe field $\omega ^{p},\ \omega ^{\alpha },\ \omega ^{\mu}$:
\begin{eqnarray}\label{shape operator}
\qquad S_a=(2F_{\alpha a}^{\mu}e_{\mu}-F_{pa}^{\alpha }e_p)\omega ^{\alpha }+ (2F_{\alpha a}^{\mu}e_{\alpha}+F_{pa}^{\mu }e_p)\omega ^{\mu}+(-F_{p a}^{\alpha }e_{\alpha }+F_{pa}^{\mu}e_{\mu})\omega ^{p},
\end{eqnarray}

Define linear operators ( \cite{CCJ} )
\begin{eqnarray}\label{ABC}
&&A_a=2F_{\alpha a}^{\mu}e_{\alpha }\omega ^{\mu}: V_- \rightarrow V_+, \nonumber\\
&&B_a=-F_{p a}^{\alpha}e_{\alpha }\omega ^{p}: V_0 \rightarrow V_+,  \\
&&C_a=F_{p a}^{\mu}e_{\mu }\omega ^{p}: \quad V_0 \rightarrow V_-,\nonumber
\end{eqnarray}
and their transposes
\begin{eqnarray*}
&&^{t}A_a=2F_{\alpha a}^{\mu}e_{\mu}\omega ^{\alpha }: V_+ \rightarrow V_-, \\
&&^tB_a=-F_{p a}^{\alpha}e_{p}\omega ^{\alpha }: V_+ \rightarrow V_0,  \\
&&^tC_a=F_{p a}^{\mu}e_{p }\omega ^{\mu}:\quad V_- \rightarrow V_0.
\end{eqnarray*}
With respect to the orthogonal direct sum decomposition $$T_xM_+=V_+\oplus V_-\oplus V_0=Span\{e_{\alpha}\}\oplus Span\{e_{\mu}\}\oplus Span\{e_{p}\},$$
the shape operator $S_a$ has the block form
$$S_a=\left(
\begin{array}{ccc}
0&A_a&B_a\\
^tA_a&0&C_a\\
^tB_a&^tC_a&0
\end{array}
\right).$$

\subsection{\emph{Proof of Theorem \ref{thm1}}.}
\quad Let $M^n$ be an isoparametric hypersurface with four distinct
principal curvatures in the unit sphere $S^{n+1}$, and let $F$ be its
Cartan-M{\"u}nzner polynomial. We only make the proof for $M_+$, as
the proof for $M_-$ is analogous.

Take the same notations as in Section 2.1. Based on the principal
decomposition $V_+$, $V_0$ and $V_-$ of the tangent space $TM_+$, we
simplify the criterion (\ref{reduced Willmore}) for Willmore
submanifolds to be:
\begin{equation}\label{M+ willmore criterion}
\sum_{\alpha }Ric(e_{\alpha })=\sum_{\mu }Ric(e_{\mu }).
\end{equation}

At first, from Gauss equation, we derive that
\begin{eqnarray*}\label{Ricci M+}
Ric(e_{\alpha }) &=& \sum _{t=m_1+1}^{2m_1+2m_2} \overline{R}(e_t, e_{\alpha} , e_t, e_{\alpha })+ \sum_{t=m_1+1}^{2m_1+2m_2}\langle h(e_t, e_t),h(e_{\alpha }, e_{\alpha }) \rangle-\sum_{t=m_1+1}^{2m_1+2m_2}|h(e_t, e_{\alpha })|^2 \nonumber\\
&=& (m_1+2m_2-1)+\sum_{t=m_1+1}^{2m_1+2m_2}\langle h(e_t, e_t),h(e_{\alpha }, e_{\alpha }) \rangle-\sum_{t=m_1+1}^{2m_1+2m_2}|h(e_t, e_{\alpha })|^2
\end{eqnarray*}
where $\overline{R}$ is the curvature tensor of $S^{n+1}$.

Immediately, the minimality of $M_+$ gives rise to
$$\sum_t\langle h(e_t, e_t),h(e_{\alpha }, e_{\alpha }) \rangle =0.$$

Additionally, another straightforward calculation depending on the formula (\ref{shape operator})
leads us to the last item in the expression of $Ric(e_{\alpha})$ as follows
\begin{eqnarray}
\sum_t|h(e_t, e_{\alpha })|^2&=& \sum_t|\sum_a\langle h(e_t, e_{\alpha }),n_a\rangle n_a+ \langle h(e_t, e_{\alpha }),n_0\rangle n_0|^2, \nonumber\\
      &=& \sum_{a,t}\langle S_a e_{\alpha },e_t \rangle^2+ \sum_t\langle S_0 e_{\alpha },e_t \rangle^2,\nonumber\\
      &=& \sum_a|S_a e_{\alpha }|^2 +1,\nonumber\\
      &=& 4\sum_{a,\mu}(F_{\alpha a}^{\mu})^2+ \sum_{a,p}(F_{pa}^{\alpha })^2+1.\nonumber
\end{eqnarray}

In summary, we arrive at
$$\sum_{\alpha }Ric(e_{\alpha })=m_2(m_1+2m_2-2)-4\sum_{a, \alpha, \mu}(F_{\alpha a}^{\mu})^2- \sum_{a,p, \alpha }(F_{pa}^{\alpha })^2.$$
Similarly,
$$\sum_{\mu}Ric(e_{\mu })=m_2(m_1+2m_2-2)-4\sum_{a, \alpha, \mu}(F_{\alpha a}^{\mu})^2- \sum_{a,p, \mu}(F_{pa}^{\mu })^2.$$
Therefore, the equality $\displaystyle\sum_{\alpha }Ric(e_{\alpha
})=\displaystyle\sum_{\mu }Ric(e_{\mu })$ holds if and only if
\begin{equation}\label{willmore equivalent}
\sum_{a,p, \alpha }(F_{pa}^{\alpha })^2=\sum_{a,p, \mu}(F_{pa}^{\mu })^2.
\end{equation}
As claimed in Lemma 49 of \cite{CCJ}, for any choice of
$a\in\{1,...,m_1\}$, there is always an orthonormal basis in $V_+$
and an orthonormal basis in $V_-$ such that relative to these bases,
$B_a=C_a$. In other words, by the definition (\ref{ABC}) of $B_a$
and $C_a$, the equality (\ref{willmore equivalent}) holds.

Now, the proof of Theorem \ref{thm1} is complete.

$\hfill{\square}$

\section{FKM-type isoparametric polynomials}
In this section, we give a detailed study on the focal submanifolds
of FKM-type and show a complete determination for which focal
submanifolds are Einstein in this situation, providing a proof of 1)
in Theorem \ref{thm2}.

\subsection{$M_-$ of FKM-type}
Recalling the Cartan-M{\"u}nzner polynomial constructed by Ferus, Karcher and M\"{u}nzner (\cite{FKM}):
$$F(x) = |x|^4 - 2\displaystyle\sum_{i = 0}^{m}{\langle
P_ix,x\rangle^2},$$
we find that
\begin{eqnarray*}
M_- &=&F^{-1}(-1)\cap S^{2l-1}\\
&=& \{x\in S^{2l-1} |\  there \ exists \ P \in \Sigma(P_0,\cdots,P_m) \ with\  Px=x\},
\end{eqnarray*}
where $\Sigma(P_0,\cdots,P_m)$ is the unit sphere in
$Span\{P_0,\cdots,P_m\}$, which is called \emph{the Clifford
sphere}.
Notice that for any $P\in \Sigma(P_0,\cdots,P_m)$, we have $P^2=I$
and $\mathrm{Trace } ~P=0$. Thus the eigenvalues of $P$ must be $\pm
1$, with the same multiplicities. Denoting the corresponding
eigenspaces by $E_+(P)$ and $E_-(P)$ respectively, we can decompose
$\mathbb{R}^{2l}$ as
$$\mathbb{R}^{2l}=E_+(P)\oplus E_-(P).$$

Let $y \in M_-$ and $P\in \Sigma(P_0,\cdots,P_m)$ with $Py=y$. Define
$$\Sigma_P:=\{Q\in \Sigma(P_0,\cdots,P_m)|\  \langle P, Q \rangle:=\frac{1}{2l}\mathrm{Trace}(PQ)=0 \}, $$
which is the equatorial sphere of $\Sigma(P_0,\cdots,P_m)$ orthogonal to $P$.
In this way, we have a decomposition of the tangent space $TM_-$
with respect to the eigenspaces of the shape operator.

\vspace{2mm} \noindent \textbf{Lemma (\cite{FKM})}\,\, {\itshape The
principal curvatures of the shape operator $S_N$ with respect to any
unit normal vector $N\in T^{\perp}_yM_-$ are $0, 1$, and $-1$, with
the corresponding eigenspaces $Ker(S_N)$, $E_+(S_N)$, $E_-(S_N)$ as
follows:
\begin{eqnarray}\label{eigenspacesM-}
Ker(S_N)&=& \{v\in E_+(P)| \ v \bot y,\  v\bot \Sigma_P N\},\nonumber\\
E_+(S_N)&=& \mathbb{R}\Sigma_P (y+N),\\
E_-(S_N)&=& \mathbb{R}\Sigma_P (y-N).\nonumber
\end{eqnarray}
Moreover, $$\dim Ker(S_N)=l-m-1,\ \dim E_+(S_N)=\dim E_-(S_N)=m.$$\hfill $\Box$
}

To facilitate the description, in front of the proof of 1) in
Theorem \ref{thm2}, we state the following lemma, which is a direct
corollary of the Gauss equation for the minimal submanifold.

\begin{lem}\label{Einstein}
\emph{Let $M^n$ be an $n$-dimensional submanifold minimally immersed
in an $(n + p)$-dimensional unit sphere $S^{n+p}$. Then $M$ is
Einstein if and only if for arbitrary $x\in M$ and orthonormal basis
$\{N_{\alpha}\}$ of $T^{\perp}_xM$,
$$\sum_{\alpha=n+1}^{n+p}|S_{N_{\alpha}}(X)|^2\ is\  constant, \quad \forall X\in T_xM \ with \ |X|=1,$$
where $S_{N_{\alpha}}$ is the shape operator with respect to the
normal vector $N_{\alpha}$.}
\end{lem}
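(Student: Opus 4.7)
The plan is to derive an explicit expression for the Ricci tensor of $M$ at an arbitrary point $x$, starting from the Gauss equation, and then read off directly when this tensor is a scalar multiple of the metric. Since the ambient space $S^{n+p}$ has constant sectional curvature $1$, for an orthonormal frame $\{e_i\}$ of $T_xM$ and any unit $X\in T_xM$ the Gauss equation yields
\begin{equation*}
Ric(X,X)=(n-1)+\Bigl\langle \sum_i h(e_i,e_i),\,h(X,X)\Bigr\rangle-\sum_i|h(e_i,X)|^2.
\end{equation*}
The minimality of $M$ kills the middle term, so the remaining task is to massage $\sum_i|h(e_i,X)|^2$ into the shape-operator quantity appearing in the lemma.

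For this, I would decompose $h(e_i,X)=\sum_\alpha \langle h(e_i,X),N_\alpha\rangle N_\alpha=\sum_\alpha \langle S_{N_\alpha}X,e_i\rangle N_\alpha$ against any orthonormal normal basis $\{N_\alpha\}$, and then interchange the summations:
\begin{equation*}
\sum_i |h(e_i,X)|^2=\sum_\alpha \sum_i \langle S_{N_\alpha}X,e_i\rangle^2=\sum_\alpha |S_{N_\alpha}X|^2.
\end{equation*}
Putting this into the previous display gives the clean identity $Ric(X,X)=(n-1)-\sum_\alpha|S_{N_\alpha}X|^2$ for every unit $X\in T_xM$, an expression that is manifestly independent of the choices of both the tangential and the normal orthonormal frames.

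With this identity in hand the equivalence is immediate. By definition $M$ is Einstein precisely when $Ric(X,X)$ takes the same value for every point $x$ and every unit tangent vector $X$ at $x$ (polarization then upgrades this to $Ric=c\,g$). Reading this off from the identity, I would conclude that this holds if and only if $\sum_\alpha |S_{N_\alpha}X|^2$ is constant as $x$ and the unit vector $X$ vary, which is precisely the stated criterion. There is no real obstacle here; the only minor point worth noting in the write-up is frame-independence, namely that replacing $\{N_\alpha\}$ by another orthonormal normal basis leaves $\sum_\alpha|S_{N_\alpha}X|^2$ unchanged, so the criterion is intrinsic. The lemma will then be applied in the sequel to test the Einstein condition on the focal submanifolds $M_\pm$ of FKM-type, where the explicit descriptions of the eigenspaces of $S_N$ derived above from \cite{FKM} make the quantity $\sum_\alpha|S_{N_\alpha}X|^2$ computable.
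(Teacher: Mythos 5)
Your proposal is correct and is essentially the argument the paper has in mind: the paper states the lemma as ``a direct corollary of the Gauss equation for the minimal submanifold,'' and the identity $Ric(X,X)=(n-1)-\sum_{\alpha}|S_{N_{\alpha}}X|^{2}$ you derive is exactly the same Gauss-equation computation the authors carry out explicitly for $Ric(e_{\alpha})$ in their proof of Theorem 1.1. Your added remarks on frame-independence and on polarization are sound and only make the ``direct corollary'' explicit.
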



With all these preparations, we are in a position to prove that the focal submanifold $M_-$ of FKM-type is not Einstein.
\vspace{3mm}

\noindent\emph{\textbf{Proof of 1) in Theorem \ref{thm2} for
$M_-$:}} \quad Given $y \in M_-$ and $P\in \Sigma(P_0,\cdots,P_m)$
with $Py=y$, let $\{N_{\alpha}\}_{\alpha=1}^{l-m}$  be an
orthonormal basis for $T^{\perp}_yM_-$. For any $X\in T_yM_-$ with
$|X|=1$, there is a decomposition
$$X=X^0_{\alpha}+X^+_{\alpha}+X^-_{\alpha}\in Ker(S_{N_{\alpha}})\oplus E_+(S_{N_{\alpha}})\oplus E_-(S_{N_{\alpha}})$$
with respect to $N_{\alpha}$ by the lemma of \cite{FKM} mentioned
before. For any $Q\in\Sigma_P$, we define $Y=Qy$, and decompose it
as
$$Y=\frac{1}{2}Q(y+N_{\alpha})+\frac{1}{2}Q(y-N_{\alpha})\in E_+(S_{N_{\alpha}})\oplus E_-(S_{N_{\alpha}})$$
for any $\alpha$. Thus $Y\ \bot\ \cup_{N_{\alpha}\in
T^{\perp}_yM_-}Ker(S_{N_{\alpha}})$, which implies immediately that
$$ \sum_{\alpha=1}^{l-m}|S_{N_{\alpha}}(Y)|^2=l-m-\sum_{\alpha=1}^{l-m}|Y^{0}_{\alpha}|^2=l-m.$$
On the other hand, choosing $Z\in Ker(S_{N_1})$ with $|Z|=1$, we
have
$$ \sum_{\alpha=1}^{l-m}|S_{N_{\alpha}}(Z)|^2=l-m-\sum_{\alpha=1}^{l-m}|Z^{0}_{\alpha}|^2\leq l-m-1.$$
Therefore, $M_-$ is not Einstein by Lemma \ref{Einstein}.
$\hfill{\square}$

\subsection{$M_+$ of FKM-type}
This subsection will be committed to proving 1) of Theorem \ref{thm2} for the focal submanifold $M_+$ of FKM-type.

We start with a description of the normal space of the focal
submanifold
$$M_+=\{~x\in S^{2l-1}~|~\langle P_0x,x\rangle=...=\langle P_mx,x\rangle=0~\}.$$
As pointed out by \cite{FKM}, the normal space at $x\in M_+$ is
\begin{equation}
T^{\perp}_xM_+=\{~Px~|~P\in\mathbb{R}\Sigma (P_0,...,P_m)~\}.
\end{equation}
Following \cite{TY}, $\forall X \in T_xM_+$ with $|X|=1$, let
$\{X=e_1$, $e_2,...,e_{2l-m-2}\}$ be an orthonormal basis for
$T_xM_+$, and $\{P_0x,...,P_mx\}$ an orthonormal basis for
$T^{\perp}_xM_+$. Then from the Gauss equation and properties of
Clifford system $\{P_0,\cdots,P_m\}$, we derive the Ricci curvature
of $X$:
\begin{equation}\label{Ricci}
Ric (X)= 2(l-m-2) + 2\sum_{i, j =0, i < j}^{m}\langle X, P_{i}P_{j}x\rangle^2.
\end{equation}
As established by \cite{TY} in their concluding remark, a sufficient
condition for $M_+$ not
 to be Einstein can be stated as
$$\dim~M_+> \dim~Span\{P_{i}P_{j}x~|~ i, j=0,1,...,m, i<j\}.$$
Moreover, \cite{TY} reveals that the focal submanifold $M_+$ of
FKM-type is not Einstein, except possibly for those with
multiplicities in one of the following pairs
\begin{equation}\label{maybe Einstein}
  (m_1, m_2)=(m, l-m-1)=(4,3),\ (5,2),\ (6,1),\ (7,8),\ (8,7),\ (9,6),\ (10,21).
\end{equation}

As asserted by \cite{FKM}, the isoparametric families of FKM-type
with multiplicities $(5,2),\ (6,1)$ are congruent to those with
multiplicities $(2,5),\ (1,6)$.
Hence, both $M_+$ in the cases $(m_1, m_2)=(5,2),$ and $(6,1)$ are
not Einstein by the proved result for $M_-$ in last subsection.

Consequently, the cases with $(m_1, m_2)=(4,3),\ (7,8)$ ,$\ (8,7)$
,$\ (9,6),\ (10,21)$ are left to our consideration. Since the
$(4,3)$ case is the most amazing one, we will firstly deal with this
case. \vspace{1mm}

\subsubsection{\textbf{The (4,3) case.}}
In this case, $m=4$ and $l=8=2\delta (4)$, where $\delta (m)$ is the dimension of irreducible representation of $C_{m-1}$. According to \cite{FKM}, there are two examples of FKM-type isoparametric polynomials with multiplicities $(4,3)$, which are distinguished by
an invariant
\begin{equation*}
\mathrm{Trace} (P_0P_1P_2P_3P_4)=2q\delta (4),~~with~~ q\equiv2\
mod\ 2.
\end{equation*}
Noticing that $P_0P_1P_2P_3P_4$ is a symmetric orthogonal matrix on
$\mathbb{R}^{16}$, we divide the proof into two parts. \vspace{2mm}

1). $q=2$. On this condition, $P_0P_1P_2P_3P_4$ is a symmetric
orthogonal matrix on $\mathbb{R}^{16}$ with
$\mathrm{Trace}(P_0P_1P_2P_3P_4)=16$. Thus it follows easily that
$P_0P_1P_2P_3P_4=I_{16}$, which makes the elements in $
\{P_{i}P_{j}x~|~ i, j=0,1,...,4, i<j \}$ perpendicular to each
other. For example, we have
\begin{eqnarray*}
&&\langle P_0P_1x, P_0P_2x\rangle=\langle P_1x, P_2x\rangle=\langle P_1, P_2\rangle\langle x, x\rangle=0 \\
&&\langle P_1P_2x, P_3P_4x\rangle=\langle P_1P_2x, P_0P_1P_2P_3P_4P_3P_4x\rangle=-\langle P_0x, x\rangle=0.
\end{eqnarray*}
As indicated in \cite{TY}, $Span\{P_iP_jx~|~ i,j=0,1,...,4,
i<j\}\subset T_xM_+$. Observing that $\#  \{P_{i}P_{j}x~|~
i,j=0,1,...,4, i<j \}=\dim T_xM_+=10$, we find that $\{P_iP_jx~|~
i,j=0,1,...,4, i<j \}$ constitutes an orthonormal basis of $T_xM_+$.

Hence, based on the formula (\ref{Ricci}), a fundamental argument in
linear algebra shows immediately that $M_+$ corresponding to $q=2$
is Einstein!

According to \cite{FKM}, this example corresponding to $q=2$ is the
homogeneous one. By the classification of homogeneous hypersurfaces
in spheres (\emph{cf.} \cite{TT}), $M_+$ in this case is
diffeomorphic to $Sp(2)$. \vspace{4mm}

2). $q=0$. On this condition, we have
$\mathrm{Trace}(P_0P_1P_2P_3P_4)=0$. Then there exists $T\in O(16)$,
such that $P_0P_1P_2P_3P_4=T^{t}\left(
\begin{array}{cc}
I&0\\
0&-I
\end{array}
\right)T$. Suppose $M_+$ is Einstein in this case. Then it is
obvious that $\{P_iP_jx~|~ i,j=0,1,...,4, i<j \}$ forms an
orthonormal basis of $T_xM_+$ for any $x\in M_+$. Hence, a simple
verification leads to
\begin{eqnarray*}
&&\langle P_0P_1P_2P_3x, x\rangle=-\langle P_0P_1x, P_2P_3x\rangle=0,\\
&&\langle P_0P_1P_2P_3x, P_iP_jx\rangle=0,\ for\ 0\leq i < j \leq 4,
\end{eqnarray*}
which imply that $P_0P_1P_2P_3x\in T^{\perp}_xM_+$.
Moreover, from the identities
\begin{equation*}
\langle P_0x, P_0P_1P_2P_3x\rangle=\langle P_1x, P_0P_1P_2P_3x\rangle=\langle P_2x, P_0P_1P_2P_3x\rangle=\langle P_3x, P_0P_1P_2P_3x\rangle=0,
\end{equation*}
it follows that $P_0P_1P_2P_3x=\pm P_4x$. That is to say,
\begin{equation}\label{T}
T^{t}\left(
\begin{array}{cc}
I&0\\
0&-I
\end{array}
\right)Tx=\pm x.
\end{equation}
Write $Tx=(y^t,z^t)^t$. Substituting into (\ref{T}), it follows
directly that $y=0$ or $z=0$. In other words, we get a map $T:
M_+^{10} \rightarrow S^{15}$ mapping $x$ to $Tx$  with
$T(M_+^{10})\subset S^7(1)\times\{0\}\cup\{0\}\times S^7(1)$, which
contradicts the fact that $T$ is an orthogonal matrix.

In conclusion, $M_+$ is not Einstein in this case!

\begin{rem}
According to \cite{FKM}, this example corresponding to $q=0$ is the
inhomogeneous one, which is congruent to the unique example of
FKM-type with multiplicities $(3,4)$. Thus $M_+$ of this example is
isometric to $M_-$ of FKM-type with multiplicities $(3,4)$, which is
not Einstein by the proved result in Section 3.1.
\end{rem}
\vspace{1mm}

\subsubsection{\textbf{The (7,8) case.}}
The key point for the proof in this case is an interesting lemma as
we state below, which relates to the condition (A) introduced by
Ozeki and Takeuchi \cite{OT}. We remark that this condition (A) was
interpreted as a condition on the second fundamental form by
\cite{FKM}.
\begin{lem}
Let $M$ be a submanifold minimally immersed in the unit sphere. If $M$ satisfies
condition (A), \emph{i.e.} at some point of $M$, the kernels of all shape operators
$S_N~(N\neq 0)$ coincide, then $M$ is not Einstein.
\end{lem}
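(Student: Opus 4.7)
\emph{Proof plan.} The idea is to feed condition (A) into the Einstein criterion of Lemma \ref{Einstein} and reduce the question to a linear-algebra assertion about a sum of squared shape operators. First, I would rephrase Lemma \ref{Einstein} as follows: for a fixed orthonormal normal frame $\{N_{\alpha}\}$ at $x_0\in M$, the quantity $\sum_{\alpha} |S_{N_{\alpha}}(X)|^2$ equals $\langle T(X),X\rangle$, where
\[
T \;:=\; \sum_{\alpha} S_{N_{\alpha}}^{2}
\]
is a self-adjoint, positive semidefinite operator on $T_{x_0}M$. Lemma \ref{Einstein} then says that $M$ is Einstein if and only if $T$ is a constant multiple of the identity at every point, say $T=c\,\mathrm{Id}$ with $c\ge 0$.

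Next, I would use condition (A) at $x_0$. Let $K\subset T_{x_0}M$ denote the common kernel of all shape operators $S_{N}$ with $N\neq 0$; by the very meaning of condition (A) in the Ozeki--Takeuchi/FKM context this $K$ is a nonzero proper subspace (otherwise either the second fundamental form vanishes at $x_0$, in which case $M$ is totally geodesic at $x_0$ and the statement is understood to exclude this degenerate situation, or $K=0$ and (A) carries no information). Picking an orthonormal basis $\{N_{\alpha}\}$ of $T^{\perp}_{x_0}M$ and any unit $X\in K$, one has $S_{N_{\alpha}}(X)=0$ for every $\alpha$, whence $T(X)=0$. Combined with the hypothetical identity $T=c\,\mathrm{Id}$, this forces $c=0$, so $T\equiv 0$ on $T_{x_0}M$.

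Finally, I would exploit positive semidefiniteness. Since each summand $S_{N_{\alpha}}^{2}$ is positive semidefinite, $T=0$ implies $S_{N_{\alpha}}^{2}=0$ for every $\alpha$, and therefore $S_{N_{\alpha}}=0$ for every $\alpha$ (each $S_{N_{\alpha}}$ being self-adjoint). By linearity in the normal vector, this means $S_{N}=0$ for all $N\in T^{\perp}_{x_0}M$; in particular the shape operators have no common proper kernel, contradicting condition (A) in its nondegenerate form. Hence $M$ cannot be Einstein.

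The only real obstacle is the bookkeeping around the degenerate cases, namely ensuring that $K$ is genuinely a nontrivial proper subspace, so that the step \textquotedblleft$c=0$ forces $T=0$ forces all $S_{N_{\alpha}}=0$\textquotedblright{} actually produces a contradiction. In the applications to focal submanifolds of isoparametric hypersurfaces (including the $(7,8)$ case to which this lemma is applied), the relevant common kernel has well-known positive dimension while the submanifold has nonvanishing second fundamental form, so the argument goes through without change.
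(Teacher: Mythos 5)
Your proof is correct and is essentially the argument the paper intends: the paper's own proof consists of the single sentence that the lemma is an immediate corollary of Lemma \ref{Einstein}, and your reduction to the self-adjoint operator $T=\sum_{\alpha}S_{N_{\alpha}}^{2}$ vanishing on the common kernel while being a multiple of the identity is precisely how that corollary is drawn. Your explicit flagging of the degenerate cases (common kernel equal to $0$ or to all of $T_{x_0}M$) is a sensible precaution that the paper leaves implicit, since in the intended application to focal submanifolds the common kernel is a nontrivial proper subspace and the second fundamental form is nonzero.
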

\begin{proof}
It is an immediate corollary of Lemma \ref{Einstein}
\end{proof}
From Theorem 5.8 in \cite{FKM}, we see that $M_+$ in the $(7,8)$ case does
satisfy the condition (A). Consequently, $M_+$ is not Einstein.
\vspace{1mm}

\subsubsection{\textbf{The (9,6) case.}}
In this case, $m=9$ and $l=16$. For the Clifford system
$\{P_0,\cdots,P_9\}$ on $\mathbb{R}^{32}$, we choose $x\in S^{31}$
to be a common eigenvector of the commuting operators
$$P_{2i}P_{2i+1}P_{2j}P_{2j+1},\ 0\leq i< j \leq 4.$$ Observing that each
$P_i$ anti-commutes with at least one of these operators, we see
$x\in M_+$. Furthermore, since $x$ is also an eigenvector of the
product of the operators mentioned above, we obtain the following
identities:
\begin{eqnarray*}
&&P_0P_1x=\pm P_2P_3x=\pm P_4P_5x=\pm P_6P_7x=\pm P_8P_9x,\\
&&P_0P_2x=\pm P_1P_3x,\ P_0P_3x=\pm P_1P_2x,\\
&&P_0P_4x=\pm P_1P_5x,\ P_0P_5x=\pm P_1P_4x,\\
&&P_0P_6x=\pm P_1P_7x,\ P_0P_7x=\pm P_1P_6x,\\
&&P_0P_8x=\pm P_1P_9x,\ P_0P_9x=\pm P_1P_8x,\\
&&P_2P_4x=\pm P_3P_5x,\ P_2P_5x=\pm P_3P_4x,\\
&&P_2P_6x=\pm P_3P_7x,\ P_2P_7x=\pm P_3P_6x,\\
&&P_2P_8x=\pm P_3P_9x,\ P_2P_9x=\pm P_3P_8x,\\
&&P_4P_6x=\pm P_5P_7x,\ P_4P_7x=\pm P_5P_6x,\\
&&P_4P_8x=\pm P_5P_9x,\ P_4P_9x=\pm P_5P_8x,\\
&&P_6P_8x=\pm P_7P_9x,\ P_6P_9x=\pm P_7P_8x.
\end{eqnarray*}
As a direct result, $\dim Span\{P_iP_jx~|~ i,j=0,1,...,9, i<j\}\leq\
21= \dim T_xM_+$. Moreover, from the formula (\ref{Ricci}) of Ricci
curvature of $M_+$, for any $X\in T_xM_+$, we derive that
\begin{eqnarray*}
Ric(X)&=& 10+2\Big\{5\langle X, P_{0}P_{1}x\rangle^2+2\sum_{i=2}^9\langle X, P_{0}P_{i}x\rangle^2+2\sum_{i=4}^9\langle X, P_{2}P_{i}x\rangle^2 \\
  && +2\sum_{i=6}^9\langle X, P_{4}P_{i}x\rangle^2+2\sum_{i=8}^9\langle X, P_{6}P_{i}x\rangle^2\Big\},
\end{eqnarray*}
At last, by the following lemma in linear algebra, $M_+$ is not
Einstein in this case.

\begin{lem}
Let $\{u_1,\cdots,u_{21}\}$ be a class of unit vectors in
$\mathbb{R}^{21}$. Then $\rho (X):=5\langle X,
u_1\rangle^2+2\displaystyle\sum_{i=2}^{21}\langle X, u_i\rangle^2$
is not constant on the unit sphere in $\mathbb{R}^{21}$.
\end{lem}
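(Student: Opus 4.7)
The key observation is that $\rho(X) = X^{\top} A X$ where $A$ is the symmetric positive semi-definite matrix
\[
A = 5\, u_1 u_1^{\top} + 2\sum_{i=2}^{21} u_i u_i^{\top}.
\]
A quadratic form is constant on the unit sphere of $\mathbb{R}^{21}$ if and only if its associated symmetric matrix is a scalar multiple of the identity. So my plan is to assume $A = cI_{21}$ for some constant $c$ and derive a contradiction.

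First I would compute $c$ by taking the trace. Since $\mathrm{tr}(u_i u_i^{\top}) = |u_i|^2 = 1$, we get $\mathrm{tr}(A) = 5 + 2\cdot 20 = 45$, forcing $c = 45/21 = 15/7$.

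Next I would evaluate $A$ on the distinguished unit vector $u_1$. A direct expansion gives
\[
A u_1 = 5\, u_1 + 2\sum_{i=2}^{21} \langle u_1, u_i\rangle\, u_i,
\]
while the assumption $A = (15/7)I_{21}$ would force $A u_1 = (15/7)\, u_1$. Taking the inner product of this identity with $u_1$ and using $|u_1|^2 = 1$ yields
\[
5 + 2\sum_{i=2}^{21} \langle u_1, u_i\rangle^2 = \tfrac{15}{7},
\]
hence $\sum_{i=2}^{21} \langle u_1, u_i\rangle^2 = -\tfrac{10}{7}$, which is absurd since the left-hand side is nonnegative.

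There is no serious obstacle here: the argument is completely elementary linear algebra once one writes $\rho$ as a quadratic form. The only point worth emphasizing is that the weight $5$ (as opposed to $2$) on $u_1$ is essential, because this is precisely what makes the trace-normalized candidate scalar $15/7$ strictly smaller than $5$, which in turn produces the sign contradiction when testing against $u_1$. Combined with the upper bound on $\dim\,\mathrm{Span}\{P_iP_jx\}$ established just above and the Ricci formula (\ref{Ricci}), the lemma implies that $\mathrm{Ric}$ cannot be constant on the unit tangent sphere at $x$, so $M_+$ is not Einstein in the $(9,6)$ case.
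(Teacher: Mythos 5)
Your argument is correct: writing $\rho(X)=X^{\top}AX$ with $A=5u_1u_1^{\top}+2\sum_{i=2}^{21}u_iu_i^{\top}$, the constancy of $\rho$ on the unit sphere would force $A=\frac{15}{7}I_{21}$ by the trace computation, yet $\rho(u_1)\ge 5>\frac{15}{7}$, a contradiction. The paper states this lemma without proof (it is invoked as ``the following lemma in linear algebra''), and your trace-versus-test-vector argument is exactly the elementary verification the authors leave to the reader; nothing is missing.
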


\subsubsection{\textbf{The (8,7) case.}}
In this case, $m=8$ and $l=16$. By the representation theory of
Clifford algebra, we can extend a Clifford system
$\{P_0,\cdots,P_8\}$ on $\mathbb{R}^{32}$ to a system
$\{P_0,\cdots,P_9\}$. Let $x\in S^{31}$ again be a common
eigenvector of the commuting operators
$$P_{2i}P_{2i+1}P_{2j}P_{2j+1},\ 0\leq i< j \leq 4.$$ Repeating the
arguments in 3.2.3, we see that $x$ belongs to $M_+$ and
$$\dim~Span\{P_iP_jx~|~ i,j=0,1,...,9, i<j\}\leq~ 21<22=\dim T_xM_+.$$
Again, using the formula (\ref{Ricci}) for the Ricci curvature, we conclude that $M_+$ is not Einstein in this case.

In fact, there are two incongruent families of FKM-type with multiplicities $(8,7)$,
neither is congruent to the $(7, 8)$ family, and both $M_+$ of the two families are not Einstein.

\subsubsection{\textbf{The (10,21) case.}}
In this case, $m=10$ and $l=32$. Choosing $x\in S^{63}$ to be a common eigenvector of the commuting operators
$$P_0P_1P_2P_3,\ P_4P_5P_6P_7,\ P_0P_1P_8P_9,\ P_2P_3P_8P_9,\ P_0P_2P_8P_{10},$$
we see that $x\in M_+$. Thus a similar argument as in 3.2.3 implies
that
$$\dim~Span\{P_iP_jx~|~ i,j=0,1,...,10, i<j\}<\ 52=\dim T_xM_+.$$
Therefore, $M_+$ is not Einstein in this case. \hfill $\Box$
\vspace{4mm}

Up to now, the proof of 1) in Theorem \ref{thm2} is complete!

\section{Homogeneous isoparametric hypersurfaces}

This section will be committed to investigating the homogeneous cases with multiplicities $(2,2)$ and $(4,5)$.

We begin by recalling a formulation of the Cartan-M{\"u}nzner
polynomial $F$ in terms of the second fundamental forms of the focal
submanifolds, developed by Ozeki and Takeuchi in \cite{OT} (see also
\cite{CCJ}, p.52). For $x\in M_+$, and an orthonormal basis
$\{n_0,n_1,\cdots,n_{m_1} \}$ of the normal space $T^{\perp}_xM_+$,
we introduce the quadratic homogeneous polynomials
\begin{equation}\label{p_i}
p_i(y):=\langle S_{n_i}y, y \rangle,\quad 0\leq i \leq m_1,
\end{equation}
where $y\in T_xM_+$. The Cartan-M{\"u}nzner polynomial F can be expressed by $p_i$ as follows,
\begin{eqnarray}\label{expansion formula}
F(tx+y+w)&=&t^4+(2|y|^2-6|w|^2)t^2+8(\sum_{i=0}^{m_1}p_i(y)w_i)t \nonumber\\
      & & +|y|^4-2\sum_{i=0}^{m_1}(p_i(y))^2+8\sum_{i=0}^{m_1}q_i(y)w_i \\
      & & +2\sum_{i,j=0}^{m_1}\langle\nabla p_i, \nabla p_j\rangle w_i w_j-6|y|^2|w|^2+|w|^4 ,\nonumber
\end{eqnarray}
where the homogeneous polynomials of degree three, $q_i(y)$, are the
components of the third fundamental form of $M_+$,
$w=\displaystyle\sum_{i=0}^{m_1}w_in_i$. \vspace{3mm}

\subsection{The homogeneous example of multiplicities (2,2).}
Consider the lie algebra $so(5, \mathbb{R})$. The special orthogonal
group $SO(5)$ acts on it by the adjoint representation
$$g\cdot Z=gZg^{-1}$$
for $g\in SO(5)$ and $Z\in so(5, \mathbb{R})$. Then the principal
orbits of this action constitute the homogeneous 1-parameter family
of isoparametric hypersurfaces in $S^9$ with multiplicities $(m_1,
m_2)=(2,2)$. Denote the $(i,j)$-entry of $Z$ by $a_{ij}\in
\mathbb{R}$. Then the Euclidean space $\mathbb{R}^{10}$ is $so(5,
\mathbb{R})$ coordinated by $a_{ij}$ with $i<j$, and according to
\cite{OT}, the Cartan-M{\"u}nzner polynomial is
\begin{eqnarray}\label{2,2F}
F(Z)&=&\frac{3}{4}(\mathrm{Trace}Z^2)^2-2\mathrm{Trace}(Z^4) \\
      &=& -\frac{5}{4}\sum_{i}|Z_i|^4 +\frac{3}{2}\sum_{i<j}|Z_i|^2|Z_j|^2-4\sum_{i<j}\langle Z_i, Z_j\rangle^2,\nonumber
      \end{eqnarray}
where $Z_i=(a_{i1},\cdots ,a_{i5})$ is the row vector of $Z=(a_{ij})$.

Making use of the expansion formula (\ref{expansion formula}), we will calculate the second fundamental forms for the
focal submanifolds $M_+:=F^{-1}(1)\cap S^9$ and $M_-:=F^{-1}(-1)\cap S^9$ at some special points,  respectively.
\vspace{2mm}

\noindent
1). The geometry of $M_-$.

Let $e$ be the point in $so(5, \mathbb{R})$ coordinated by
\begin{equation}
\left\{ \begin{array}{ll}
a_{12}=-a_{21}=1, \nonumber\\
a_{ij}=0,\ otherwise.\nonumber
\end{array}\right.
\end{equation}
Substituting $e$ into (\ref{2,2F}), we get immediately that $F(e)=-1$.
It is easy to see that the isotropy subgroup at $e$ is $SO(2)\times SO(3)$,
thus $M_-$ is diffeomorphic to $\widetilde{G}_2(\mathbb{R}^5)=\frac{SO(5)}{SO(2)\times SO(3)}$.

Taking $e$ as the reference point, we can expand the polynomial $-F$ with respect to $a_{12}$:
\begin{eqnarray}
-F&=&a_{12}^4 + a_{12}^2\Big\{2(a_{13}^2+a_{14}^2+a_{15}^2+a_{23}^2+a_{24}^2+a_{25}^2)-6(a_{34}^2+a_{35}^2+a_{45}^2)\Big\} \nonumber\\
      & &+16a_{12}\Big\{a_{34}(a_{24}a_{13}-a_{23}a_{14})+a_{35}(a_{25}a_{13}-a_{23}a_{15})+a_{45}(a_{25}a_{14}-a_{24}a_{15})\Big\}\nonumber\\
      & &+G, \nonumber
\end{eqnarray}
where $G$ denotes the sum of other items containing no
$a_{12}$. Comparing this expansion with (\ref{expansion formula}),
we find that $\{a_{34}, a_{35}, a_{45}\}$ and $\{a_{13}, a_{14},
a_{15},$ $a_{23}, a_{24}, a_{25}\}$ are respectively the normal and
tangent coordinates. Setting $$w_0=a_{34}, w_1=a_{35}, w_2=a_{45},$$
we have from (\ref{expansion formula}) that
\begin{eqnarray*}
p_0&=&2(a_{24}a_{13}-a_{23}a_{14}), \\
p_1 &=& 2(a_{25}a_{13}-a_{23}a_{15}),\\
p_2 &=&2(a_{25}a_{14}-a_{24}a_{15}).
\end{eqnarray*}
Furthermore, using (\ref{p_i}), a direct calculation leads to
$$|S_{0}X|^2+|S_1X|^2+|S_2X|^2=2(a_{13}^2+a_{14}^2+a_{15}^2+a_{23}^2+a_{24}^2+a_{25}^2),$$
for any unit tangent vector $X=(a_{13}, a_{14},
a_{15},$ $a_{23}, a_{24}, a_{25})$. From Lemma \ref{Einstein} and the homogeneity of $M_-$, it follows
that $M_-$ is Einstein.

\begin{rem}
Noticing that $M_-$ is diffeomorphic to the Grassmann manifold $\widetilde{G}_2(\mathbb{R}^5)$
of the oriented two-planes in $\mathbb{R}^5$, it is natural to ask that if the induced metric on $M_-$
from the Euclidean space $\mathbb{R}^{10}$ is the unique invariant metric on the compact irreducible symmetric space
$\widetilde{G}_2(\mathbb{R}^5)$. The answer is affirmative: in virtue of Lemma 1.8
in \cite{SOL}, $M_-\subset \mathbb{R}^{10}$ is just the standard
Pl{\"u}cker embedding of $\widetilde{G}_2(\mathbb{R}^5)$ into $\mathbb{R}^{10}$.
\end{rem}

\vspace{2mm}

\noindent
2). The geometry of $M_+$.

Choose a point $e'$ with coordinates
$a_{12}=a_{34}=\frac{1}{\sqrt{2}}$ and zero otherwise. Clearly,
$F(e')=1$. By a computation of the isotropy subgroup at $e'$
(\emph{cf}. \cite{TXY}), we see that $M_+$ is diffeomorphic to
$\frac{SO(5)}{U(2)}\cong\mathbb{C}P^3$. In terms of new coordinates we
introduced below
$$a_{12}:=(t+w_0)/\sqrt{2},\ a_{34}:=(t-w_0)/\sqrt{2},$$
$$a_{13}:=(w_2-z_2)/\sqrt{2},\ a_{24}:=(w_2+z_2)/\sqrt{2},$$
$$a_{14}:=(z_1-w_1)/\sqrt{2},\ a_{23}:=(z_1+w_1)/\sqrt{2},$$
and
$$x_1:=a_{35},\ x_2:=a_{45},\ y_1:=a_{15},\ y_2:=x_{25},$$
$F$ can be expanded with respect to $t$ as
\begin{eqnarray}
F&=&t^4+ \Big\{2(x_1^2+x_2^2+y_1^2+y_2^2+z_1^2+z_2^2)-6(w_0^2+w_1^2+w_2^2)\Big\}t^2 \nonumber\\
      & &+8\Big\{(x_1^2+x_2^2-y_1^2-y_2^2)w_0+2(x_1y_1+x_2y_2)w_1+2(x_2y_1-x_1y_2)w_2\Big\}t+G',\nonumber
\end{eqnarray}
where $G'$ denotes the sum of other items containing no $t$.
Comparing this expansion with (\ref{expansion formula}), we find that $\{w_0, w_1, w_2\}$ and
$\{x_1, x_2, y_1, y_2, z_1, z_2\}$ are respectively the normal and tangent coordinates.
The components of the second fundamental form of $M_+$ at $e'$ are
\begin{eqnarray*}
p_0&=&x_1^2+x_2^2-y_1^2-y_2^2,\\
p_1&=&2(x_1y_1+x_2y_2),\\
p_2&=&2(x_2y_1-x_1y_2).
\end{eqnarray*}

Therefore, for any $X=(x_1, x_2, y_1, y_2, z_1, z_2)\in T_{e'}M_+$ with $|X|=1$,
$$|S_{0}X|^2+|S_1X|^2+|S_2X|^2=3(x_1^2+x_2^2+y_1^2+y_2^2),$$
which is not constant, namely, $M_+$ is not Einstein by Lemma \ref{Einstein}.
\vspace{2mm}

\subsection{The homogeneous example of multiplicities (4,5).}
This case resembles the $(2,2)$ case strongly. However, both focal submanifolds $M_+$ and $M_-$ in this case
are not Einstein. To show the assertion, consider the Lie algebra $so(5, \mathbb{C})$. The unitary group $U(5)$
acts on it by the adjoint representation
$$g\cdot Z=\overline{g}Zg^{-1}$$
for $g\in U(5)$ and $Z\in so(5, \mathbb{C})$. The principal orbits of this action constitute the
homogeneous 1-parameter family of isoparametric hypersurfaces in $S^{19}$ with multiplicities $(m_1, m_2)=(4,5)$.
Denote the $(i,j)$-entry of $Z$ by $a_{ij}=x_{ij}+\sqrt{-1}y_{ij}$ with real $x_{ij}$ and $y_{ij}$. Then the Euclidean space $\mathbb{R}^{20}$ is $so(5, \mathbb{C})$ coordinated by $x_{ij}$ and $y_{ij}$ with $i<j$.
Again, according to \cite{OT}, the Cartan-M{\"u}nzner polynomial is
\begin{eqnarray}
F(Z)&=&\frac{3}{4}(\mathrm{Trace}Z\overline{Z})^2-2\mathrm{Trace}(Z\overline{Z})^2, \nonumber\\
      &=& -\frac{5}{4}\sum_{i}|Z_i|^4 +\frac{3}{2}\sum_{i<j}|Z_i|^2|Z_j|^2-4\sum_{i<j}|\langle Z_i, Z_j\rangle|^2,\nonumber
      \end{eqnarray}
where $Z_i=(a_{i1},\cdots ,a_{i5})$ is the row vector of $Z=(a_{ij})$ and $\langle Z_i, Z_j\rangle$ is the Hermitian inner product .

Comparing with the expansion formula (\ref{expansion formula}), we will investigate the second fundamental forms for the
focal submanifolds $M_+:=F^{-1}(1)\cap S^{19}$ and $M_-:=F^{-1}(-1)\cap S^{19}$ respectively.
\vspace{2mm}

\noindent
1). Geometry of $M_+$.

Following \cite{CHI}, we choose a point $e$ with coordinates $x_{12}=x_{34}=\frac{1}{\sqrt{2}}$
and zero otherwise. Clearly, $F(e)=1$.
In a similar way as 2) in 4.1, we introduce new coordinates
\begin{eqnarray*}
&& x_{12}:=(t+w_0)/\sqrt{2},\ x_{34}:=(t-w_0)/\sqrt{2}, \\
&& x_{13}:=(w_3-z_4)/\sqrt{2},\ x_{24}:=(w_3+z_4)/\sqrt{2}, \\
&& y_{13}:=(-z_3-w_4)/\sqrt{2},\ y_{24}:=(-z_3+w_4)/\sqrt{2}, \\
&& x_{14}:=(z_2-w_1)/\sqrt{2},\ x_{23}:=(z_2+w_1)/\sqrt{2}, \\
&& y_{14}:=(w_2+z_1)/\sqrt{2},\ y_{23}:=(w_2-z_1)/\sqrt{2},
\end{eqnarray*}
and
\begin{eqnarray*}
&& x_1:=x_{35},\ x_2:=y_{35},\ x_3:=x_{45},\ x_4:=y_{45},\ x_5:=y_{34},\\
&& y_1:=x_{15},\ y_2:=y_{15},\ y_3:=x_{25},\ y_4:=y_{25},\ y_5:=y_{12}.
\end{eqnarray*}
Then $(w_0, w_1, w_2, w_3, w_4)$ are the normal coordinates, $(x_1,...,x_5,y_1,...,y_5,z_1,...,z_4)$
are the tangent coordinates. Expanding $F$ with respect to $t$, the components of the second fundamental form of $M_+$ at $e$  are given by
\begin{eqnarray*}
&&p_0=x_{1}^2+\cdots +x_5^2-y_1^2-\cdots-y_5^2,\\
&&p_1=2(x_1y_1+\cdots+x_4y_4) +\sqrt{2}(x_5+y_5)z_1,\\
&&p_2=2(x_2y_1-x_1y_2)+2(x_3y_4-x_4y_3)+\sqrt{2}(x_5+y_5)z_2,\\
&&p_3=2(x_3y_1-x_1y_3)+2(x_4y_2-x_2y_4)+\sqrt{2}(x_5+y_5)z_3,\\
&&p_4=2(x_2y_3-x_3y_2)+2(x_4y_1-x_1y_4)+\sqrt{2}(x_5+y_5)z_4.
\end{eqnarray*}
Therefore, for any $X=(x_1, \cdots, x_5, 0,\cdots, 0, 0, \cdots, 0)\in T_eM_+$,
$$\sum_{i=0}^{4}|S_{i}X|^2=5\sum_{\alpha  =1}^{5}x_{\alpha }^2-2x_5^2,$$
which implies that $M_+$ is not Einstein by Lemma \ref{Einstein}. We remark that $M_+$ is diffeomorphic to the homogeneous space $\frac{U(5)}{Sp(2)\times U(1)}$ (\emph{cf.} \cite{TXY}). In fact, it fibers over $\mathbb{C}P^4$ with fiber $\frac{U(4)}{Sp(2)}$.
\vspace{2mm}

\noindent
2). Geometry of $M_-$.

Let $e'$ be a point with coordinates $x_{12}=-x_{21}=1$ and zero otherwise. Clearly, we have $F(e')=-1$.
With respect to $x_{12}$, $-F$ can be expanded as:
$$-F=x_{12}^4+Ax_{12}^2+8Bx_{12}+C,$$
where
\begin{eqnarray}
A&=&2(y_{12}^2+|a_{13}|^2+|a_{14}|^2+|a_{15}|^2+|a_{23}|^2+|a_{24}|^2+|a_{25}|^2)\nonumber\\
 & & -6(|a_{34}|^2+|a_{35}|^2+|a_{45}|^2),\nonumber\\
B&=&x_{34}(-2x_{14}x_{23}+2x_{13}x_{24}+2y_{14}y_{23}-2y_{13}y_{24}) \nonumber\\
 & &+x_{35}(-2x_{15}x_{23}+2x_{13}x_{25}+2y_{15}y_{23}-2y_{13}y_{25})\nonumber\\
 & &+x_{45}(-2x_{15}x_{24}+2x_{14}x_{25}+2y_{15}y_{24}-2y_{14}y_{25})\nonumber\\
 & &+y_{34}(-2x_{14}x_{23}+2x_{13}y_{24}-2y_{14}x_{23}+2y_{13}x_{24})\nonumber\\
 & &+y_{35}(-2x_{15}y_{23}+2x_{13}y_{25}-2y_{15}x_{23}+2y_{13}x_{25})\nonumber\\
 & &+y_{45}(-2x_{15}y_{24}+2x_{14}y_{25}-2y_{15}x_{24}+2y_{14}x_{25}),\nonumber
\end{eqnarray}
$C$ denotes the sum of those items containing no $x_{12}$.
From this expansion formula, we see that $\{x_{34},\ x_{35},\ x_{45},\ y_{34},\ y_{35},\ y_{45}\}$
and $\{y_{12},\ x_{13},\ y_{13},$ $x_{14},\ y_{14},\ x_{15},\ y_{15},\ x_{23},$ $y_{23},\ x_{24},\ y_{24},\ x_{25},\ y_{25}\}$ are respectively the normal and tangent coordinates.
Setting $$w_0=x_{34},\ w_1=x_{35},\ w_2=x_{45},\ w_3=y_{34},\ w_4=y_{35},\ w_5=y_{45},$$
we get
\begin{eqnarray*}
p_0&=& -2x_{14}x_{23}+2x_{13}x_{24}+2y_{14}y_{23}-2y_{13}y_{24},\\
p_1 &=& -2x_{15}x_{23}+2x_{13}x_{25}+2y_{15}y_{23}-2y_{13}y_{25}, \\
p_2 &=& -2x_{15}x_{24}+2x_{14}x_{25}+2y_{15}y_{24}-2y_{14}y_{25}, \\
p_3 &=& -2x_{14}x_{23}+2x_{13}y_{24}-2y_{14}x_{23}+2y_{13}x_{24}, \\
p_4 &=& -2x_{15}y_{23}+2x_{13}y_{25}-2y_{15}x_{23}+2y_{13}x_{25}, \\
p_5 &=& -2x_{15}y_{24}+2x_{14}y_{25}-2y_{15}x_{24}+2y_{14}x_{25}.
\end{eqnarray*}
Observing that the unit tangent vector $Y_{12}$ with $y_{12}=1$ is contained in $Ker(S_{i})$ for $0\leq i\leq 5$,
we derive from the formulas above that
$$\sum_{i=0}^5|S_{i}(Y_{12})|^2=0.$$
On the other hand, there does exist a unit tangent vector $v$ such that
$$\sum_{i=0}^5|S_{i}v|^2\neq 0.$$
This leads us to the final conclusion that
$M_-$ in the $(4, 5)$ case is not Einstein by Lemma \ref{Einstein}. We remark that $M_+$ is diffeomorphic to $\frac{U(5)}{SU(2)\times U(3)}$ (\emph{cf.} \cite{TXY}).

\begin{ack}
The authors would like to thank Professor Q.S.Chi for his excellent lectures on isoparametric
hypersurfaces given at Beijing Normal University and valuable discussions. They also express their sincere gratitude to Professor T.E.Cecil for his
handwritten translation of \cite{FKM} and many helpful comments during the third author preparing the Latex version of
the manuscript.
\end{ack}

\end{document}